\definecolor{linkcolor}{HTML}{00008B}
\definecolor{citecolor}{HTML}{0000CD}
\theoremstyle{plain}
  \newtheorem{theorem}{Theorem}
  \newtheorem{lemma}{Lemma}
  \newtheorem{propos}{Proposition}
\theoremstyle{definition}
  \newtheorem{defin}{Definition}
  \newtheorem{remark}{Remark}
  \newtheorem{problem}{Problem}
\numberwithin{equation}{section}
\newenvironment{itemize*}
  {\begin{itemize}
    \setlength{\itemsep}{1pt}
    \setlength{\parskip}{1pt}}
  {\end{itemize}}
\newenvironment{enumerate*}
  {\begin{enumerate}
    \setlength{\itemsep}{1pt}
    \setlength{\parskip}{1pt}}
  {\end{enumerate}}
\title{
Maximization of functionals depending on the terminal value and the running maximum of a martingale: a mass transport approach.}
\author{Nikolay Lysenko\footnote{{
The article was prepared within the framework of the Academic Fund Program at the National Research University Higher School of Economics (HSE) in 2014--2015 (grant \textnumero 14-05-0007) and supported within the framework of a subsidy granted to the HSE by the Government of the Russian Federation for the implementation of the Global Competitiveness Program.
}}}
\begin{document}

\maketitle

\begin{abstract}
It is known that the Az\' ema-Yor solution to the Skorokhod embedding problem maximizes the law of the running maximum of an uniformly integrable martingale with given terminal value distribution. Recently this optimality property has been generalized to expectations of certain bivariate cost functions depending on the terminal value and the running maximum.

In this paper we give an extension of this result to another class of functions. In particular, we study a class of cost functions with the property that the corresponding optimal embeddings are not Az\' ema-Yor. The suggested approach is quite straightforward modulo basic facts of the Monge-Kantorovich mass transportation theory. Loosely speaking, the joint distribution of the running maximum and the terminal value in the Az\' ema-Yor embedding is concentrated on the graph of a monotone function, and we show that this fact follows from the cyclical monotonicity criterion for solutions to the Monge-Kantorovich problem.
\vspace{5pt}

\noindent \textit{Keywords:} Skorokhod problem, Az\' ema-Yor embedding, Monge-Kantorovich problem, optimal transport, supermodular functions, running maximum and the terminal value of a martingale.
\end{abstract}

\section{Introduction}

Let $W_t$ be the canonical Wiener process and $\mu$ be a centred probability measure on $\mathbb{R}$, i.e. probability measure such that $\int_\mathbb{R} x \: \mu(dx)=0$ and $\int_{\mathbb{R}_{\ge 0}} x \: \mu(dx)< +\infty$. Further $\mu$ will be treated as the law of the terminal value of an 
%stopped 
uniformly integrable martingale started at $0$.

One of the key concepts considered in this paper is the Skorokhod embedding (see \cite{Skor65}). The following formulation can be found in \cite{Hob11}.
\begin{problem}[Skorokhod]
For a given $\mu$ find an uniformly integrable stopping time $\tau$ (with respect to the filtration generated by the canonical Wiener process) such that $W_\tau \sim \mu$.
\end{problem}
Several constructions for $\tau$ are known, and some of them possess different optimality properties. Detailed surveys are presented in works of Ob\l{\'o}j \cite{Obl04} and Hobson \cite{Hob11}. It will be sufficient for our aims to mention only the Az\' ema-Yor embedding~\cite{AY79}, because it plays a significant role in the setup of the paper.

Hobson and Klimmek \cite{HK12} considered maximization of expected values of the cost functions depending on the terminal value and the running maximum. They have shown that the Az\' ema-Yor embedding solves this problem within the class of all uniformly integrable martingales for a broad class of functions. Such functions $F(w, s)$ are assumed to be continuous and differentiable with respect to the running maximum $F_s$. In addition, $F_s / (s - w)$ is assumed to be nondecreasing in $w$. Actually, paper \cite{HK12} contains a lot of other interesting results, but exactly the above fact motivates to study optimality of the Az\' ema-Yor solution for bivariate cost functions.

For additional information about the Az\' ema-Yor embedding one can consult Section~3.1 of \cite{HK12} and the abovementioned comprehensive surveys \cite{Obl04} and \cite{Hob11}. However, the only property of the Az\' ema-Yor embedding used in this paper is as follows. 

\begin{theorem}[Az\' ema and Yor; \cite{AY79}]\label{thm:AzYor}
Consider the class of uniformly integrable martingales with given distribution $\mu$ at the terminal moment $T$. For each positive $l$ the probability that the running maximum of a process from this class is greater than or equal $l$, is not more than $\mu([\beta_\mu^{-1}(l); +\infty))$, where $\beta_\mu^{-1}$ is the inverse barycenter function of $\mu$, i.e. the function inverse to the function $\beta_\mu(k) = \mathbb{E}_\mu [x | x \ge k]$. Moreover, these values are attained for the Az\' ema-Yor embedding  for all $l \ge 0$ simultaneously.
\end{theorem}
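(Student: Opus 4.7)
\medskip
\noindent \textbf{Proof plan.}
My plan is to establish the upper bound by a short optional-stopping argument combined with a Hardy--Littlewood-type rearrangement, and then to obtain equality by using the defining properties of the Az\'ema--Yor construction.

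\emph{Step 1 (optional stopping).} Fix $l>0$ and, for a generic uniformly integrable martingale $M$ with $M_T\sim\mu$, introduce the first-entry time $\tau_l=\inf\{t\ge 0\colon M_t\ge l\}$. The event $\{S_T\ge l\}$ coincides with $\{\tau_l\le T\}$, and on this event $M_{\tau_l\wedge T}\ge l$ (with possible overshoot in the non-continuous case), while on the complement $M_{\tau_l\wedge T}=M_T$. Optional stopping for the uniformly integrable martingale $M$ yields $\mathbb{E}[M_{\tau_l\wedge T}]=0$, and decomposing this expectation according to $\{\tau_l\le T\}$ gives
\[
l\,\mathbb{P}(S_T\ge l)\ \le\ \mathbb{E}[M_T\,\mathbf{1}_{S_T\ge l}].
\]

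\emph{Step 2 (rearrangement bound).} Set $p=\mathbb{P}(S_T\ge l)$. Among all events $A$ with $\mathbb{P}(A)=p$, the quantity $\mathbb{E}[M_T\,\mathbf{1}_A]$ is maximized by the upper-tail event $\{M_T\ge k\}$ where $k$ is the upper $p$-quantile of $\mu$, and the maximal value equals $p\,\beta_\mu(k)$ by the definition of the barycenter function. Plugging this into Step 1 gives $l\le \beta_\mu(k)$, hence $k\ge \beta_\mu^{-1}(l)$ (since $\beta_\mu$ is non-decreasing), and therefore
\[
\mathbb{P}(S_T\ge l)=\mu([k,+\infty))\le \mu\bigl([\beta_\mu^{-1}(l),+\infty)\bigr),
\]
which is precisely the claimed inequality. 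A little care with atoms of $\mu$ and with the overshoot in the c\`adl\`ag case is needed, but both points are handled by choosing $k$ as the appropriate quantile and by noting that the inequality $M_{\tau_l\wedge T}\ge l\cdot\mathbf{1}_{\{\tau_l\le T\}}+M_T\mathbf{1}_{\{\tau_l>T\}}$ is enough.

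\emph{Step 3 (equality for Az\'ema--Yor and simultaneity).} For the Az\'ema--Yor embedding $\tau_{AY}=\inf\{t\colon S_t\ge \beta_\mu(W_t)\}$ one verifies, from the explicit construction, that the joint law of $(W_{\tau_{AY}},S_{\tau_{AY}})$ is supported on the graph $\{s=\beta_\mu(w)\}\cup\{\text{vertical segment at }w=0\}$, so that $\{S_{\tau_{AY}}\ge l\}=\{W_{\tau_{AY}}\ge \beta_\mu^{-1}(l)\}$ up to null sets; taking probabilities converts this into the matching equality $\mathbb{P}(S_{\tau_{AY}}\ge l)=\mu([\beta_\mu^{-1}(l),+\infty))$. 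Crucially, this identification holds for \emph{every} $l\ge 0$ from the same realization of $\tau_{AY}$, which gives the ``simultaneously'' clause. The main technical obstacle is exactly this last identification of the joint law under the Az\'ema--Yor embedding; it is classical but genuinely uses the explicit form of the stopping time and the martingale property applied between successive levels of the running maximum.
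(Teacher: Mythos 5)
The paper does not prove this theorem: it is stated as a cited classical result of Az\'ema and Yor \cite{AY79} and used as a black box throughout (it is invoked, e.g., in the proofs of Theorem~\ref{thm:AzYorOpt}, Lemma~\ref{lem:MonIsJoint}, Theorem~\ref{thm:General}, and Theorem~\ref{thm:LSC}). So there is no ``paper proof'' to compare against; what you have written is a reconstruction of the standard argument, and it is essentially the correct one.

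Your Steps 1--2 are the classical Blackwell--Dubins/Doob-type derivation: optional stopping at $\tau_l\wedge T$ together with $\mathbb{E}[M_T]=0$ gives $l\,\mathbb{P}(\tau_l\le T)\le\mathbb{E}[M_T\mathbf{1}_{\tau_l\le T}]$, and the Hardy--Littlewood rearrangement bound $\mathbb{E}[M_T\mathbf{1}_A]\le\int_{k}^{\infty}x\,\mu(dx)$ for events $A$ with $\mathbb{P}(A)=p=\mu([k,\infty))$ converts this into $l\le\beta_\mu(k)$, hence $k\ge\beta_\mu^{-1}(l)$ and $\mathbb{P}(S_T\ge l)=\mu([k,\infty))\le\mu([\beta_\mu^{-1}(l),\infty))$. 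This is exactly the proof one finds in the surveys cited by the paper (Ob\l\'oj, Hobson), and the atom/overshoot caveats you flag are real but routine, and moot here because the paper explicitly assumes $\mu$ has no atoms.

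Two small remarks on Step 3. First, the description of the support of the joint law of $(W_{\tau_{AY}},S_{\tau_{AY}})$ as ``the graph $\{s=\beta_\mu(w)\}\cup\{\text{vertical segment at }w=0\}$'' is not quite right: on $\{\tau_{AY}>0\}$ the stopping rule $\tau_{AY}=\inf\{t: S_t\ge\beta_\mu(W_t)\}$ together with continuity of $W$ and of $\beta_\mu$ (the latter holds since $\mu$ is atomless) forces $S_{\tau_{AY}}=\beta_\mu(W_{\tau_{AY}})$ outright, so the support is the graph of $\beta_\mu$ with no extra vertical piece. Second, and more substantively, Step 3 silently uses that $W_{\tau_{AY}}\sim\mu$; this is the actual content of the Az\'ema--Yor embedding theorem and is the genuinely nontrivial part (via excursion theory or a potential/balayage computation), whereas the concentration on the graph is immediate from the definition of $\tau_{AY}$. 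You acknowledge this as ``classical,'' which is fair, but in a self-contained proof this is where the real work would be. Given that the paper itself treats the whole theorem as a citation, your sketch is an adequate and correct account of why it is true.
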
 

The paper of Hobson and Klimmek uses the results obtained by Rogers \cite{Rog93}. These results are applied in our paper as well. The following necessary and sufficient condition for a measure on $\mathbb{R} \times \mathbb{R}$ to be a joint distribution of the terminal value and the running maximum of an uniformly integrable martingale is of particular importance.

\begin{theorem}[Rogers; based on Corollary 2.4 of \cite{Rog93}]

The measure $\pi$ on $\mathbb{R} \times \mathbb{R}$ is a joint distribution of the final value and the running maximum of an uniformly integrable martingale starting from 0 if and only if all the assumptions (\ref{eq:RogCor1})-(\ref{eq:RogCor4}) are satisfied:

\begin{equation}\label{eq:RogCor1}
\int_{\mathbb{R} \times \mathbb{R}} |x| \: \pi(dx,dy) < +\infty,
\end{equation}

\begin{equation}\label{eq:RogCor2}
\int_{\mathbb{R} \times \mathbb{R}} x \: \pi(dx,dy) = 0,
\end{equation}

\begin{equation}\label{eq:RogCor3}
\mathrm{supp} \, \pi \subset M := (\mathbb{R} \times \mathbb{R}_{\ge 0}) \cap \{(x,y) \, : \, y - x \ge 0 \},
\end{equation}

\begin{equation}\label{eq:RogCor4}
 \mathbb{E}_\pi[ x \, | \, y \ge s] \ge s , \ \ \forall s \ge 0.
\end{equation}

\end{theorem}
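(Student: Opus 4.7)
The plan is to split the equivalence into necessity and sufficiency, which require entirely different arguments. Necessity is a direct verification: if $X$ is a UI martingale started at $0$ with terminal value $X_\infty$ and running maximum $Y_\infty = \sup_{t\ge 0} X_t$, then (\ref{eq:RogCor1})--(\ref{eq:RogCor2}) follow from $X_\infty \in L^1$ and $\mathbb{E} X_\infty = 0$, while (\ref{eq:RogCor3}) holds pathwise since $Y_\infty \ge 0$ and $Y_\infty \ge X_\infty$. For (\ref{eq:RogCor4}) I would introduce the first-passage time $\tau_s = \inf\{t : X_t \ge s\}$, observe that $\{Y_\infty \ge s\} = \{\tau_s < \infty\}$ with $X_{\tau_s} \ge s$ on that event by path-continuity, and apply optional sampling to the UI martingale $X$ to obtain
\begin{equation*}
\mathbb{E}\bigl[X_\infty \mathbf{1}_{\{Y_\infty \ge s\}}\bigr] = \mathbb{E}\bigl[X_{\tau_s} \mathbf{1}_{\{\tau_s < \infty\}}\bigr] \ge s\,\mathbb{P}(Y_\infty \ge s),
\end{equation*}
which rearranges to (\ref{eq:RogCor4}).

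Sufficiency is the substantive direction and is where the actual construction has to be done. Given $\pi$ satisfying (\ref{eq:RogCor1})--(\ref{eq:RogCor4}), I would denote by $\mu$ its first marginal and by $\pi_Y$ its second marginal, and disintegrate $\pi(dx,dy) = \pi_Y(dy)\,\kappa(y,dx)$; by (\ref{eq:RogCor3}), $\kappa(y,\cdot)$ is supported in $(-\infty,y]$. The construction is two-stage on a canonical Brownian motion $W$: first a UI stopping time $\tau_1$ producing the running-maximum law $\pi_Y$, then, after the last visit to the realised maximum level $Y = \sup_{t \le \tau_1} W_t$, a further Skorokhod embedding on the downward excursion matching the conditional distribution $\kappa(Y,\cdot)$. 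Feasibility of the first stage is precisely where (\ref{eq:RogCor4}) earns its keep: combined with the Hardy--Littlewood rearrangement inequality,
\begin{equation*}
s\,\pi_Y([s,\infty)) \;\le\; \int_{\mathbb{R}\times[s,\infty)} x\,\pi(dx,dy) \;\le\; \int_{[F_\mu^{-1}(1-\pi_Y([s,\infty))),\,+\infty)} x\,\mu(dx),
\end{equation*}
it yields $\pi_Y([s,\infty)) \le \mu([\beta_\mu^{-1}(s), +\infty))$. By Theorem~\ref{thm:AzYor} this is the largest maximum distribution any UI martingale with terminal law $\mu$ can produce, so $\pi_Y$ is achievable, for instance through a generalised Az\' ema--Yor rule on a rich enough filtration.

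The hardest step, I expect, is not matching the marginals but matching the full joint law. Using the strong Markov property at $\tau_1$ and the measurability in $y$ of the kernel $\kappa(y,\cdot)$, one must verify that $(W_\tau, \sup_{t \le \tau} W_t) \sim \pi$ for the composite stopping time $\tau$ produced above, and that $\tau$ is uniformly integrable. The UI claim is inherited from (\ref{eq:RogCor1})--(\ref{eq:RogCor2}) via $\mathbb{E}|W_\tau| < +\infty$ and $\mathbb{E} W_\tau = 0$; the joint-law identity requires a careful conditioning argument and probably a regularising approximation in the generic case where $\pi_Y$ has atoms or $\kappa(y,\cdot)$ places mass at the endpoint $y$ itself.
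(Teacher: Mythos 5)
The paper does not prove this theorem: it is cited directly as Corollary~2.4 of Rogers~\cite{Rog93}, and the only proof content the paper adds is the brief argument (in the text immediately after the statement) that the extra hypothesis in Rogers' original formulation --- monotonicity of $s \mapsto \mathbb{E}_\pi[x \,|\, y \ge s]$ --- already follows from (\ref{eq:RogCor3}) and (\ref{eq:RogCor4}) and may therefore be dropped. Your proposal instead sketches a proof from scratch, so I assess the sketch on its own terms rather than compare routes.

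The necessity direction is essentially sound. The sufficiency direction has two genuine gaps. First, the claim that uniform integrability of the stopped process ``is inherited from (\ref{eq:RogCor1})--(\ref{eq:RogCor2}) via $\mathbb{E}|W_\tau| < +\infty$ and $\mathbb{E} W_\tau = 0$'' is false: integrability and centredness of $W_\tau$ do not make $(W_{t\wedge\tau})$ uniformly integrable. With $H_1 = \inf\{t : W_t = 1\}$ and $\tau = \inf\{t > H_1 : W_t = 0\}$ one has $W_\tau = 0$ a.s., so both conditions hold, yet optional stopping at $H_1 \le \tau$ would give $1 = \mathbb{E}[W_{H_1}] = 0$ if the stopped process were UI; minimality must be built into the construction, not deduced from the moment conditions. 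Second, the two-stage construction is too loosely specified to produce the target coupling: at the end of stage one you know the realised maximum $Y$ but have not pinned down the path's position, while stage two must embed $\kappa(Y,\cdot)$ without the running maximum ever exceeding $Y$ again --- a constraint on both the stage-one terminal position and the stage-two stopping rule that you do not reconcile with the marginal requirement. Your Hardy--Littlewood step does correctly show that $\pi_Y$ is dominated by the Az\' ema-Yor maximum law and is thus achievable as a maximum marginal, but achievability of the marginal $\pi_Y$ does not give achievability of the coupling $\pi$; that is exactly where Rogers' explicit stopping rule and his minimality verification do the real work, and it is this that is missing. Finally, you omit the one thing the paper actually proves about this statement, namely the redundancy of the monotonicity hypothesis.
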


Original formulation of Rogers contains one more assumption, namely that $\mathbb{E}_\pi[x \, | \, y \ge s]$ is nondecreasing with respect to $s$. This assumption, however, can be omitted, because it follows immediately from (\ref{eq:RogCor3}) and (\ref{eq:RogCor4}). Indeed, assume existence of $s_1$ and $s_2$ such that $s_1 < s_2$, but $\mathbb{E}_\pi[x \, | \, y \ge s_1] > \mathbb{E}_\pi[x \, | \, y \ge s_2]$.
Note that
$$\mathbb{E}_\pi[x \, | \,  y \ge s_1] = \lambda \mathbb{E}_\pi[x \, | \, s_1 \le y < s_2] + (1-\lambda) \mathbb{E}_\pi[x \, | \, y \ge s_2]$$ 
for some $0 < \lambda \le 1$.
Hence 
$\mathbb{E}_\pi[x \, | \, s_1 \le y < s_2] \ge  \mathbb{E}_\pi[x \, | \, y \ge s_1]  > \mathbb{E}_\pi[x \, | \, y \ge s_2] \ge  s_2$, which is prohibited by (\ref{eq:RogCor3}).

\vspace{5pt}

Application of the Monge-Kantorovich theory might be considered as the third key component of our approach. From rigorous point of view the current problem is not covered by the optimal transport theory, but, fortunately, the reduction to mass transportation problem is possible after some preparations.
% Parallels that may be drawn are informative. 
The idea to treat the maximization problem for expectation of a bivariate function as a transportation problem with some restrictions appeared in discussions with Alexander Kolesnikov. More information about the optimal transportation theory the interested reader can find in \cite{Vil03} and \cite{BK12}, meanwhile an example of the constrained transportation problem is studied, for instance, in \cite{Zaev14}.

\vspace{5pt}

In order to set the main problem of the article, it is convenient to introduce some definitions designed for internal usage.

\begin{defin}\label{def:Adm}
Given a centred probability measure $\mu$ and a positive $T$, the set of \textit{$\mu$-admissible} (or simply admissible) processes is the set of all uniformly integrable martingales starting from $0$ at $t=0$ which are distributed according to $\mu$ at the terminal moment $t = T$.
\end{defin}

\begin{defin}\label{def:Serr}
A bivariate function $F: \mathbb{R} \times \mathbb{R} \to \mathbb{R}$ is called \textit{serrated} if for every $w_0 \in \mathbb{R}$ the univariate function $F(w_0, s)$ is increasing for $s \! \in \! (-\infty; a_{w_0})$ and decreasing for $s \in (a_{w_0}; +\infty)$, where $a_{w_0} \in \mathbb{R} \cup \{-\infty, +\infty\}$ (value plus or minus infinity means that $F(w_0, s)$ is decreasing or increasing respectively). The set $R_F = \{(w_0, a_{w_0}) : w_0 \in \mathbb{R}\} \subset \mathbb{R} \times (\mathbb{R} \cup \{-\infty, +\infty\})$ is called \textit{ridge} of $F$.
\end{defin}

Recall another useful definition that comes from the applications of the optimal transportation theory.

\begin{defin}\label{def:Supermodular}
A bivariate function $F: \mathbb{R} \times \mathbb{R} \to \mathbb{R}$ is called \textit{supermodular} if it has the following property:
\begin{equation}\label{eq:Supermod}
\forall w_1, w_2, s_1, s_2 \in \mathbb{R} \: \: \left\{ \begin{aligned}
 w_1 < w_2\\
 s_1 < s_2\\
\end{aligned}\right. \: \Rightarrow F(w_1, s_1) + F(w_2, s_2) \ge F(w_1, s_2) + F(w_2, s_1).
\end{equation}
If the rightmost inequality in (\ref{eq:Supermod}) is strict, the function is called \textit{strictly supermodular}.
\end{defin}

\begin{remark}
A strictly supermodular serrated function has an increasing ridge.
\end{remark}

%After giving definitions it is possible to set the main problem of the paper and to say some words about remainder organization.
Assume that we are given a continuous strictly supermodular serrated function $F(w, s)$, a centred probability measure $\mu$, and a positive $T$. The problem studied in Section~\ref{Sc:Basic} is as follows: find a $\mu$-admissible process $M_t$ maximizing the expected value of $F(M_T, S_T)$, where $S_T = \max_{0 \le t \le T} M_t$, among of all $\mu$-admissible processes. Despite this case is narrow, it allows to see the core idea clearly. However, the suggested proof is not the simplest one.
%It is quite complicated, because it appeals to detailed matters, but not to general concepts. 
%In Subsection \ref{Sc:IntDis} one can find a shorter proof, which applies directly the results of the optimal transportation theory.
 
Actually, it is convenient to assume that $\mu$ has no atoms, because in this case the barycenter function of $\mu$ is continuous and its graph has no vertical intervals. Absence of atoms is not crucial for the reasonings of the paper, but it allows us to get rid of unnecessary complication, so below it is assumed by default.

\vspace{5pt}

If $F(w, s)$ is a strictly supermodular function and $G(w, s)$ is a (not necessary strictly) supermodular function, then their sum \mbox{$H(w, s) = F(w, s) + G(w, s)$} is again strictly supermodular. This obvious fact leads to the idea how a conclusion might be drawn in some cases of discontinuous functions. Such cases are considered in Subsection~\ref{Sc:IntDis}, meanwhile the construction analogous to abovementioned result of \cite{HK12} is represented in Subsection \ref{Sc:TRoHaK}.

\vspace{5pt}

We emphasize that in Section \ref{Sc:Gener} neither differentiability nor even continuity of $F$ is required. Furthermore, examples of proper discontinuous strictly supermodular serrated functions are discussed in Subsection \ref{Sc:IntDis}. Thus, the results obtained in our article are not covered by \cite{HK12}.

\vspace{5pt}

Finally, let us note that the problem under consideration has a natural interpretation in terms of model-independent finance. Suppose that an exotic derivative with payout function $F(M_T, S_T)$ is going to be underwritten at the moment $t = 0$, where $M_t$ is the price process of the underlying asset, and again $S_T = \max_{0 \le t \le T} M_t$. The problem is to determine the no-arbitrage price of the derivative.

What data is available? The idea to retrieve market prognosis from the current quotations of liquid European call options goes back to Breeden and Litzenberger~\cite{BL78}. Their construction allows to reconstruct the measure $\mu$ under assumption of the presence of continuum of liquid calls with the same maturity $T$. Another peace of information is that the underlying asset price process must be an uniformly integrable martingale started at some fixed price. If someone has a model for the underlying asset price, it must be calibrated to this $\mu$ in order to get the exact value. Nevertheless, in model-independent finance all the models that are consistent with available information are treated as realistic, regardless their peculiarities. Thereby the following question arises: what are the upper and the lower bounds for the derivative price? In other words, the seller is interested not in unique price, but in a range of no-arbitrage prices. It is easy to see that this financial problem agrees with the former probabilistic problem.

\vspace{5pt}

The author thanks Alexander Kolesnikov and Alexander Gushchin for their interest and stimulating discussions.

\section{The basic approach: the result for continuous strictly supermodular functions}\label{Sc:Basic}

The lemma below may be considered as an analogue of variational theorems on $c$-cyclical monotonicity of solutions to the Monge-Kantorovich problem. This is the reason why the approach of the paper is called mass transport approach, and a reader not familiar with optimal transportation theory should not be confused by coming to a false conclusion that the mass transport technique means every tricks with measure rearrangement.

\begin{lemma}\label{lem:Variat}
Let $F(w, s)$ be a strictly supermodular continuous function. Assume we are given a centred probability measure $\mu$ and a positive $T$. Then every $\mu$-admissible process maximizing expectation of $F(M_T, S_T)$ has a nondecreasing joint distribution of the terminal value and the running maximum $\pi$. The latter means that there are no intervals \mbox{$I_1, I_2 \subset \mathbb{R}_1$} and $J_1, J_2 \subset \mathbb{R}_2$ (subscripts below $\mathbb{R}$ indicate coordinate axis) with the properties
\begin{equation}\label{eq:VarCond1}
\sup I_1 \le \inf I_2, \: \inf J_1 \ge \sup J_2,
\end{equation}
\begin{equation}\label{eq:VarCond2}
\pi(I_1 \times J_1) > 0, \: \pi(I_2 \times J_2) > 0.
\end{equation}
\end{lemma}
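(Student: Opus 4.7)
\noindent\textit{Proof plan.}
The plan is to argue by contradiction through a cyclical-monotonicity style rearrangement that respects the admissibility constraints from Rogers's theorem. Suppose intervals $I_1, I_2, J_1, J_2$ satisfying (\ref{eq:VarCond1})--(\ref{eq:VarCond2}) exist. After shrinking them slightly I may assume $\sup I_1 < \inf I_2$ and $\sup J_2 < \inf J_1$ strictly, and $m_i := \pi(I_i \times J_i) > 0$; write $\tilde p_i := \pi|_{I_i \times J_i}/m_i$ and let $\alpha_i, \beta_i$ be its $w$- and $s$-marginals.

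For small $\epsilon > 0$ I would perturb $\pi$ to
\begin{equation*}
\pi_\epsilon := \pi - \epsilon\tilde p_1 - \epsilon\tilde p_2 + \epsilon\,(\alpha_1 \otimes \beta_2) + \epsilon\,(\alpha_2 \otimes \beta_1),
\end{equation*}
i.e.\ withdraw an $\epsilon$-fraction of the mass on each $I_i \times J_i$ and paste it back onto $I_1 \times J_2$ and $I_2 \times J_1$ as product measures with matched marginals. For $\epsilon \le \min(m_1, m_2)$ the measure $\pi_\epsilon$ is nonnegative, and by construction both its $w$- and its $s$-marginal coincide with those of $\pi$. This immediately gives Rogers conditions (\ref{eq:RogCor1}) and (\ref{eq:RogCor2}); for (\ref{eq:RogCor3}), on $I_1 \times J_2$ one has $w \le \sup I_1 \le \inf I_2 \le \inf J_2 \le s$ (using $I_2 \times J_2 \subset M$), and analogously on $I_2 \times J_1$. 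A Fubini calculation gives
\begin{equation*}
\tfrac{1}{\epsilon}\!\left(\int F\, d\pi_\epsilon - \int F\, d\pi\right) = \iint\!\bigl[F(w, s') + F(w', s) - F(w, s) - F(w', s')\bigr]\, d\tilde p_1(w, s)\, d\tilde p_2(w', s'),
\end{equation*}
whose integrand is strictly positive because $w < w'$ and $s' < s$ throughout the support and $F$ is strictly supermodular; so $\int F\, d\pi_\epsilon > \int F\, d\pi$, giving the desired contradiction --- provided $\pi_\epsilon$ is admissible.

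The main obstacle is thus verifying (\ref{eq:RogCor4}). Since the $s$-marginal is preserved, it suffices to show $\Delta(t) := \int_{s \ge t} w\, d(\pi_\epsilon - \pi) \ge 0$ for every $t \ge 0$. The product form of the added terms reduces this to
\begin{equation*}
\Delta(t)/\epsilon = \tilde p_1(s \ge t)\bigl(\bar x_2 - \bar x_1(t)\bigr) + \tilde p_2(s \ge t)\bigl(\bar x_1 - \bar x_2(t)\bigr),
\end{equation*}
where $\bar x_i := \int w\, d\alpha_i$ and $\bar x_i(t)$ is the conditional $w$-mean under $\tilde p_i$ given $s \ge t$. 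A case split on the position of $t$ relative to $J_1, J_2$ handles everything; in the only nontrivial case, $t \in (\inf J_2, \sup J_2]$, one has $\tilde p_1(s \ge t) = 1$ and $\bar x_1(t) = \bar x_1$, and decomposing $\bar x_2 = q\bar x_2(t) + (1-q)\bar x_2'$ (where $q := \tilde p_2(s \ge t)$ and $\bar x_2'$ is the analogous conditional $w$-mean over $\{s < t\}$) and noting $\bar x_2' \ge \inf I_2 \ge \bar x_1$ reduces $\Delta(t)/\epsilon$ to the manifestly nonnegative $(1-q)(\bar x_2' - \bar x_1)$. The real subtlety lies in choosing the perturbation with \emph{both} marginals $\alpha_i, \beta_i$ of $\tilde p_i$ as building blocks (not only the $w$-marginal, as naive mass-transport intuition would suggest), so that condition (\ref{eq:RogCor4}) survives the swap.
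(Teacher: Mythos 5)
Your proof follows the same core strategy as the paper's: a two-point cyclical-monotonicity swap, removing mass from $I_1\times J_1$ and $I_2\times J_2$ and pasting it onto $I_1\times J_2$ and $I_2\times J_1$ with matched marginals so that $\mu$ and the law of the running maximum are both preserved and hence Rogers's conditions (\ref{eq:RogCor1})--(\ref{eq:RogCor2}) are automatic. The differences are implementation choices, and on the whole yours are tighter. Where the paper shrinks to $\delta$-neighbourhoods and invokes uniform continuity of $F$ on a compact set to bound the error of replacing $F$ by its values at the centres $(x_i,y_j)$ (the quantity $Q$), you instead observe the exact Fubini identity
\[
\tfrac{1}{\epsilon}\Bigl(\textstyle\int F\,d\pi_\epsilon - \int F\,d\pi\Bigr)
= \iint\bigl[F(w, s') + F(w', s) - F(w, s) - F(w', s')\bigr]\, d\tilde p_1(w, s)\, d\tilde p_2(w', s'),
\]
whose integrand is pointwise strictly positive by strict supermodularity once the intervals are separated; this eliminates the continuity machinery (and, incidentally, the argument then uses continuity of $F$ nowhere, which is aesthetically consistent with the paper's later Section~\ref{Sc:Gener}). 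For condition (\ref{eq:RogCor4}) the paper gives a telegraphic conditional-probability argument via (\ref{eq:ProofRogCor4-1})--(\ref{eq:ProofRogCor4-3}); your explicit decomposition $\Delta(t)/\epsilon = \tilde p_1(s\ge t)(\bar x_2 - \bar x_1(t)) + \tilde p_2(s\ge t)(\bar x_1 - \bar x_2(t))$ and the reduction to $(1-q)(\bar x_2' - \bar x_1)$ is a complete proof and, to my mind, cleaner. Your closing remark that both marginals $\alpha_i,\beta_i$ must be used as building blocks is exactly the point that makes the construction work and is only implicit in the paper.

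One imprecision worth fixing: in checking (\ref{eq:RogCor3}) you write the chain $w \le \sup I_1 \le \inf I_2 \le \inf J_2 \le s$ ``using $I_2\times J_2\subset M$'', but that containment is not guaranteed by the shrinking you describe (it can fail when $\operatorname{supp}\pi$ meets $I_2\times J_2$ near the diagonal $y=x$). The conclusion is nonetheless correct: the added mass $\alpha_1\otimes\beta_2$ is supported on $\operatorname{supp}\alpha_1\times\operatorname{supp}\beta_2$, and since $\operatorname{supp}\pi\subset M$ every $(x,y)\in\operatorname{supp}\pi\cap(I_2\times J_2)$ satisfies $y\ge x\ge \inf I_2\ge\sup I_1$, hence $\operatorname{supp}\beta_2\subset[\sup I_1,\infty)\cap\mathbb{R}_{\ge 0}$, which is what is actually needed; symmetrically, $\operatorname{supp}\alpha_2\subset(-\infty,\inf J_1]$ because $x\le y\le\sup J_2\le\inf J_1$ on $\operatorname{supp}\pi\cap(I_2\times J_2)$. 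Replacing the appeal to $I_2\times J_2\subset M$ with this support argument closes the gap.
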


\begin{proof}
Assume that a joint distribution of the terminal value and the running maximum of an admissible process $\pi$ is not nondecreasing. Let us construct a competitor $\overline{\pi}$ on $\mathbb{R} \times \mathbb{R}$, which is a joint distribution generated by an admissible process giving a better value to the cost functional.

Since $\pi$ is not nondecreasing, there are intervals $I_1$, $I_2$, $J_1$ and $J_2$, for which (\ref{eq:VarCond1}) and (\ref{eq:VarCond2}) hold true.

For the sake of technical purposes, it is appropriate to trim them. $I_1 \times J_1$ lies inside a compact set, so there is a point $(x_1, y_1) \in I_1 \times J_1$, such that every $\varepsilon$-neighborhood ($\varepsilon>0$) of this point has a non-zero $\pi$-mass. There exists a point $(x_2, y_2) \in I_2 \times J_2$ with the same property. Consider a compact set $C$ containing both $I_1 \times J_1$ and $I_2 \times J_2$ and then choose $\delta$ little enough to ensure that $\forall (a_1, b_1),(a_2, b_2) \in C$
\begin{equation}\label{eq:UnifCont}
(|(a_1, b_1)-(a_2, b_2)| < \delta) \Rightarrow (|F(a_1, b_1) - F(a_2, b_2)| < Q),
\end{equation}
where $Q = \frac{1}{4}(F(x_1, y_2) + F(x_2, y_1) - F(x_1, y_1) - F(x_2, y_2))$. 
This can be done because of uniform continuity property. Now let $U_1 = (x_1 - \frac{\delta}{2}; x_1 + \frac{\delta}{2})$ and $V_1 = (y_1 - \frac{\delta}{2}; y_1 + \frac{\delta}{2})$. If $y_2 \ne 0$, let $U_2$ and $V_2$ be intervals such that $\mathrm{diam} \, U_2 = \mathrm{diam} \, V_2 < \min(\delta, 2y_2)$ and $(x_2, y_2)$ is the center of the square $U_2 \times V_2$. Else (i.e. if $y_2 = 0$) let $U_2 = (x_2 - \frac{\delta}{2}; x_2 + \frac{\delta}{2})$ and $V_2 = [0; \frac{\delta}{2})$ (it does not matter that $V_2$ is not open). The last conditions are helpful, when proving that (\ref{eq:RogCor3}) remains valid. Remind that the aim is to trim initial intervals, so if $U_1 \times V_1 \subset I_1 \times J_1$ and $U_2 \times V_2 \subset I_2 \times J_2$ are not satisfied, $\delta$ must be chosen little enough to satisfy these conditions.

Now all the preparations are done, and it is possible to construct $\overline{\pi}$ iteratively.

Denote $\min (\pi(U_1 \times V_1), \pi(U_2 \times V_2))$ by $m$. Then construct $\pi_1$ as a measure on $\mathbb{R} \times \mathbb{R}$ that is defined in the following manner:
\begin{enumerate*}
\item it coincides with $\pi$ at subsets of $(\mathbb{R} \times \mathbb{R}) \setminus (U_1 \times V_1)$,
\item for each $\EuScript{S} \subset U_1 \times V_1$ $\pi_1(\EuScript{S}) = \frac{\pi(U_1 \times V_1) - m}{\pi(U_1 \times V_1)}\pi(\EuScript{S})$,
\item it is extended additively to other measurable subsets.
\end{enumerate*}
Note that $\pi_1$ is not probability measure, because its mass equals to $1 - m$. One can treat $\pi_1$ as the result of subtraction of the measure $\sigma_1$ from $\pi$.

Let $\sigma_2$ be a measure of mass $m$ which support is contained inside of $U_2 \times V_1$ and such that for each neighborhood $U$, $U \subset U_2$, $\sigma_2(U \times V_1)$ is proportional to $\pi(U \times V_2)$ and for each neighborhood $V$, $V \subset V_1$, $\sigma_2(U_2 \times V)$ is proportional to $\pi(U_1 \times V)$ (note that $\sigma_2$ is not uniquely defined). Let us define $\pi_2$ as follows:
\begin{enumerate*}
\item $\pi_2$ coincides with $\pi_1$ at subsets of $(\mathbb{R} \times \mathbb{R}) \setminus (U_2 \times V_1)$, \item for each $\EuScript{S} \subset U_2 \times V_1$ $\pi_2(\EuScript{S}) = \pi(\EuScript{S}) + \sigma_2(\EuScript{S})$,
\item it is extended additively to other measurable subsets.
\end{enumerate*}

Here the first step is done. The second (and the final) step is to reassign mass $m$ from $U_2 \times V_2$ to $U_1 \times V_2$ such that the projection on the first coordinate equals to $\mu$. In order to do so define $\pi_3$ in the following manner:
\begin{enumerate*}
\item $\pi_3$ coincides with $\pi_2$ at subsets of $(\mathbb{R} \times \mathbb{R}) \setminus (U_2 \times V_2)$, \item for each $\EuScript{S} \subset U_2 \times V_2$ $\pi_3(\EuScript{S}) = \frac{\pi(U_1 \times V_1) - m}{\pi(U_1 \times V_1)}\pi(\EuScript{S})$,
\item it is extended additively to other measurable subsets.
\end{enumerate*}
The measure $\pi_3$ may be interpreted as the result of subtraction of the measure $\sigma_3$.

Let $\sigma_4$ be a measure of mass $m$ supported inside of $U_1 \times V_2$ such that for each neighborhood $U$, $U \subset U_1$, $\sigma_4(U \times V_2)$ is proportional to $\pi(U \times V_1)$ and for each neighborhood $V$, $V \subset V_2$, $\sigma_4(U_1 \times V)$ is proportional to $\pi(U_2 \times V)$ (again $\sigma_4$ is not uniquely defined). Finally, define $\pi_4$ as a measure with the properties
\begin{enumerate*}
\item $\pi_4$ coincides with $\pi_3$ on subsets of $(\mathbb{R} \times \mathbb{R}) \setminus (U_1 \times V_2)$, \item for each $\EuScript{S} \subset U_1 \times V_2$ $\pi_4(\EuScript{S}) = \pi(\EuScript{S}) + \sigma_4(\EuScript{S})$,
\item $\pi_4$ is extended additively to other measurable subsets
\end{enumerate*}

It remains to prove that $\overline{\pi} = \pi_4$ is the desired competitor.

First we observe that $\mathrm{Pr}_1 \, \pi_4 = \mu$, because $\mathrm{Pr}_1 \, \sigma_1 = \mathrm{Pr}_1 \, \sigma_4$ and $\mathrm{Pr}_1 \, \sigma_2 = \mathrm{Pr}_1 \, \sigma_3$. This fact implies that  (\ref{eq:RogCor1}) and (\ref{eq:RogCor2}) are satisfied.

Assumption (\ref{eq:RogCor3}) is satisfied too, because $(x_2, y_2) \in M = (\mathbb{R} \times \mathbb{R}_{\ge 0}) \cap \{(x,y) \, : \, y - x \ge 0 \}$, and this provides that $U_1 \times V_2$ and $U_2 \times V_1$ are subsets of the required set $M$; it can be clearly seen from the construction.

Further, (\ref{eq:RogCor4}) holds true, because $\forall s \in \mathbb{R}_{\ge 0}$ the three statements are true: 
\begin{equation}\label{eq:ProofRogCor4-1}
\mathbb{P}_\pi[ x \in U_1 \, | \, y \ge s] \ge \mathbb{P}_{\pi_4}[ x \in U_1 \, | \, y \ge s],
\end{equation}
\begin{equation}\label{eq:ProofRogCor4-2}
\mathbb{P}_\pi[ x \in U_2 \, | \, y \ge s] \le \mathbb{P}_{\pi_4}[ x \in U_2 \, | \, y \ge s],
\end{equation}
\begin{equation}\label{eq:ProofRogCor4-3}
\mathbb{P}_\pi[ x \notin U_1 \, \mathrm{and} \, x \notin U_2 \, | \, y \ge s] = \mathbb{P}_{\pi_4}[ x \notin U_1 \, \mathrm{and} \, x \notin U_2 \, | \, y \ge s].
\end{equation}
Putting (\ref{eq:ProofRogCor4-1})-(\ref{eq:ProofRogCor4-3}) together yields the desired result that $\forall s \in \mathbb{R}_{\ge 0}$
\begin{equation}\label{eq:ProofRogCor4-T}
\mathbb{E}_{\pi_4}[ x \, | \, y \ge s] \ge \mathbb{E}_\pi[ x \, | \, y \ge s] \ge s.
\end{equation}

Thus, $\pi_4$ is the joint distribution of the terminal value and the running maximum of an uniformly integrable martingale starting from 0 and, moreover, this process is $\mu$-admissible, because the law of the terminal value is $\mu$.

Finally, we need to prove that the process corresponding to $\pi_4$ is better. To this end we prove the following inequality:
\begin{equation}\label{eq:RecInForAreas}
\int F(w, s) \, (\sigma_2 + \sigma_4)(dw, ds) - \int F(w, s) \, (\sigma_1 + \sigma_3)(dw, ds) > 0.
\end{equation}
The desired result follows from the following line of computations:

\begin{align*}
\int & F(w, s)  (\sigma_2 + \sigma_4)(dw, ds)   - \int F(w, s) (\sigma_1 + \sigma_3)(dw, ds)
\\& =
\int \bigl(F(x_1, y_2) + (F(w, s) - F(x_1, y_2)\bigr)  \sigma_2(dw, ds)
\\& +
\int \bigl(F(x_2, y_1) + (F(w, s) - F(x_2, y_1)\bigr)  \sigma_4(dw, ds) 
\\& -
\int \bigl(F(x_2, y_2) + (F(w, s) - F(x_2, y_2)\bigr)  \sigma_3(dw, ds)
\\& -
\int \bigl(F(x_1, y_1) + (F(w, s) - F(x_1, y_1)\bigr)  \sigma_1(dw, ds)
\\& =
\int F(x_1, y_2)  \sigma_2(dw, ds) + \int (F(w, s) - F(x_1, y_2))  \sigma_2(dw, ds) 
\\& +
\int F(x_2, y_1)  \sigma_4(dw, ds) + \int (F(w, s) - F(x_2, y_1))  \sigma_4(dw, ds) 
\\& -
\int F(x_2, y_2)  \sigma_3(dw, ds) - \int (F(w, s) - F(x_2, y_2))  \sigma_3(dw, ds)
\\&  -
\int F(x_1, y_1)  \sigma_1(dw, ds) - \int (F(w, s) - F(x_1, y_1))  \sigma_1(dw, ds)
\\& =
(F(x_1, y_2) + F(x_2, y_1) - F(x_2, y_2) - F(x_1, y_1))m 
\\& +
\int \bigl(F(w, s) - F(x_1, y_2)\bigr)  \sigma_2(dw, ds) +
\int \bigl(F(w, s) - F(x_2, y_1)\bigr)  \sigma_4(dw, ds) 
\\& -
\int \bigl(F(w, s) - F(x_2, y_2)\bigr)  \sigma_3(dw, ds) -
\int \bigl(F(w, s) - F(x_1, y_1)\bigr)  \sigma_1(dw, ds) 
\\& >
\bigl(F(x_1, y_2) + F(x_2, y_1) - F(x_2, y_2) - F(x_1, y_1)\bigr) m 
\\& -
\Bigl(\int Q  \sigma_2(dw, ds) +
 \int Q  \sigma_3(dw, ds) +
 \int Q  \sigma_4(dw, ds) +
 \int Q  \sigma_1(dw, ds)\Bigr) = 0.
\end{align*}

This completes the proof.
\end{proof}

With the help of Lemma \ref{lem:Variat} we establish several results on optimal admissible processes. We start with two extremal types of strictly supermodular serrated functions and get a generalizing statement at the end of this section.

\begin{theorem}\label{thm:AzYorOpt}
Let $F(w, s)$ be a continuous strictly supermodular serrated function, and let its ridge $R_F$ be the set $\{(w, +\infty) \, : \, w \in \mathbb{R}\}$. Then the functional $\mathbb{E}[F(w, s)]$ considered on the set of all $\mu$-admissible processes is maximized by the Az\' ema-Yor embedding.
\end{theorem}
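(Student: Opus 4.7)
The plan is to combine Lemma \ref{lem:Variat} with Theorem \ref{thm:AzYor} through a stochastic-dominance comparison. The hypothesis $R_F = \{(w,+\infty) : w \in \mathbb{R}\}$ says that $s\mapsto F(w_0,s)$ is strictly increasing on all of $\mathbb{R}$ for every $w_0$. Heuristically, to maximize $\mathbb{E}[F(M_T,S_T)]$ one therefore wants the running maximum $S_T$ to be stochastically as large as possible given the terminal law $\mu$, and Theorem \ref{thm:AzYor} asserts precisely that the Az\'ema-Yor embedding achieves this upper bound simultaneously at every level $l\ge 0$.

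First I would fix an admissible maximizer $\pi^*$ of $\int F\,d\pi$ (existence follows by standard tightness and continuity arguments on the set of admissible joint laws with first marginal $\mu$, using continuity of $F$). Lemma \ref{lem:Variat}, applied to the strictly supermodular $F$, forces the support of $\pi^*$ to be a nondecreasing set. Combined with the atomlessness of $\mu$, this identifies $\pi^*$ with the comonotone (upper Fr\'echet) coupling of $\mu$ and its own second marginal $\nu^*$: writing $\Phi_\mu$ for the distribution function of $\mu$ and $G_\nu$ for the right-continuous inverse of the distribution function of $\nu$, one has $\pi^* = (\mathrm{Id}, G_{\nu^*}\circ\Phi_\mu)_\#\mu$.

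Second, I would identify the Az\'ema-Yor joint law $\pi^{AY}$ as the comonotone coupling of $\mu$ and its own second marginal $\nu^{AY}$. The classical form of the Az\'ema-Yor stopping rule gives $S_T$ as a nondecreasing barrier function of $M_T$ (built from the barycenter function $\beta_\mu$) on $\{M_T>0\}$, and $S_T=0$ on $\{M_T\le 0\}$, so the support of $\pi^{AY}$ is a nondecreasing set and, by atomlessness of $\mu$, $\pi^{AY} = (\mathrm{Id}, G_{\nu^{AY}}\circ\Phi_\mu)_\#\mu$. Theorem \ref{thm:AzYor} then yields $\nu^{AY}\succeq\nu^*$ in stochastic order, equivalently $G_{\nu^{AY}}\ge G_{\nu^*}$ pointwise on $(0,1)$. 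Since $F(w,\cdot)$ is increasing, this gives $F\bigl(w,G_{\nu^{AY}}(\Phi_\mu(w))\bigr)\ge F\bigl(w,G_{\nu^*}(\Phi_\mu(w))\bigr)$ for $\mu$-a.e.\ $w$; integrating against $\mu$ yields $\int F\,d\pi^{AY}\ge\int F\,d\pi^*$, and optimality of $\pi^*$ upgrades this to equality, so the Az\'ema-Yor embedding attains the maximum.

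The main obstacle is the second step: the excerpt emphasizes that Theorem \ref{thm:AzYor} is "the only property of the Az\'ema-Yor embedding used in this paper", so one must either import comonotonicity of $\pi^{AY}$ from the explicit construction of the stopping rule, or else bypass it — for example, by verifying directly that the comonotone coupling of $\mu$ with $\nu^{AY}$ satisfies Rogers' conditions (\ref{eq:RogCor1})--(\ref{eq:RogCor4}), and then invoking uniqueness of the comonotone coupling with given marginals to identify that candidate law with the Az\'ema-Yor embedding.
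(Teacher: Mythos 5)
Your proof is correct and follows essentially the same route as the paper: Lemma~\ref{lem:Variat} forces the optimal law onto a monotone support, Theorem~\ref{thm:AzYor} caps the running-maximum marginal, and monotonicity of $s\mapsto F(w,s)$ lets you integrate. The only difference is packaging: the paper observes directly that any admissible law with monotone support must lie below the graph of $\beta_\mu$ (otherwise, by monotonicity of the support, the tail $\pi(\{y\ge l\})$ would exceed the Az\'ema--Yor bound from Theorem~\ref{thm:AzYor} at some level $l$), and then bounds $\int F(w,s)\,U_w(ds)\le F(w,\beta_\mu(w))$ pointwise in $w$; you instead pass through the comonotone (Hoeffding--Fr\'echet) representation of both laws and compare quantile functions via stochastic dominance of the second marginals. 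These two phrasings of the same comparison are interchangeable. Two small remarks: (i) the concern you raise about identifying $\pi^{AY}$ with the comonotone coupling $(\mathrm{Id},\beta_\mu)_\#\mu$ is real but shared by the paper's own argument (the final display implicitly uses $\mathbb{E}_{AY}[F]=\int F(w,\beta_\mu(w))\,\mu(dw)$); the clean fix is exactly the one you suggest, namely verifying Rogers' conditions (\ref{eq:RogCor1})--(\ref{eq:RogCor4}) for the comonotone candidate, which the paper does later in Lemma~\ref{lem:MonIsJoint}; and (ii) both your argument and the paper's are conditional on the existence of a maximizer, so if one wants an unconditional statement it is cleaner to bypass existence: for any admissible $U$ with second marginal $\nu$, the comonotone coupling of $\mu$ and $\nu$ is again admissible, dominates $U$ by supermodularity, and lies under the barycenter graph, after which the same integration applies.
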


\begin{proof}
By Lemma \ref{lem:Variat}, the joint distribution generated by an optimal process must have a nondecreasing support. A $\mu$-admissible process can not generate a distribution with nondecreasing support which has a non-zero mass higher than the graph of the barycenter function $\beta_\mu$. It follows from the Az\' ema-Yor embedding optimality property for nondecreasing functions depending only on the running maximum (see Theorem \ref{thm:AzYor}). Thereby all the mass is placed not higher than the barycenter function graph. Since for every $w_0$ the univariate functions $F(w_0, s)$ is assumed to be increasing, the Az\' ema-Yor embedding is the optimal admissible process.

It can be seen from the following line of computations, where $U$ stands for the joint distribution of an arbitrary $\mu$-admissible process, $U_w$ stands for the conditional distribution given fixed $w$, and $AY$ stands for the joint distribution of the Az\' ema-Yor embedding:
\begin{equation*}
\mathbb{E}_U [F(M_T, S_T)] = \int \Bigl( \int F(w, s) U_w(ds) \Bigr) \mu(dw) \le \int F(w, \beta_\mu(w)) \mu(dw) = \mathbb{E}_{AY} [F(M_T, S_T)].
\end{equation*}
The inequality is equality if $U$ is equivalent to $AY$.
\end{proof}

\begin{remark}
An example of a function satisfying assumptions of Theorem \ref{thm:AzYorOpt} is $F(w, s) = (\arctan w + 2)s$.
\end{remark}

\begin{propos}\label{prop:PureJumpOpt}
Let $F(w, s)$ be a measurable serrated function, and let its ridge $R_F$ be the set $\{(w, -\infty) \, : \, w \in \mathbb{R}\}$. Then the functional $\mathbb{E}[F(w, s)]$ considered on the set of all $\mu$-admissible processes is maximized by the the pure jump process, i.e. the process which is constant at the time interval $[0; T/2)$, jumps to a value of a random variable with law $\mu$ at the $t = T/2$, and equals to another constant at the time interval $[T/2; T]$.
\end{propos}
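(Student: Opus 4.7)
The argument is considerably simpler than that of Theorem \ref{thm:AzYorOpt}: it does not need Lemma \ref{lem:Variat} at all, only a direct pointwise comparison. First I would observe that for every $\mu$-admissible process $M_t$ the running maximum $S_T = \max_{0 \le t \le T} M_t$ satisfies $S_T \ge M_T^+ := \max(M_T, 0)$ almost surely. Indeed, $S_T \ge M_0 = 0$ and $S_T \ge M_T$ by the very definition of the running maximum, so $S_T$ dominates both quantities and hence their maximum. By assumption the ridge of $F$ is $\{(w, -\infty) : w \in \mathbb{R}\}$, which under Definition \ref{def:Serr} means that for every fixed $w_0 \in \mathbb{R}$ the univariate slice $s \mapsto F(w_0, s)$ is decreasing on the whole real line. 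Combining these two facts yields the pointwise estimate
\[
F(M_T, S_T) \le F(M_T, M_T^+) \quad \text{a.s.}
\]

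Taking expectations then gives an upper bound that depends only on the marginal law $\mu$:
\[
\mathbb{E}[F(M_T, S_T)] \le \mathbb{E}[F(M_T, M_T^+)] = \int_{\mathbb{R}} F(w, w^+)\, \mu(dw).
\]
The remaining task is to exhibit a $\mu$-admissible process for which equality is attained. The pure jump process $M^{\mathrm{pj}}$ from the statement is the natural candidate: on $[0, T/2)$ it is identically $0$, and on $[T/2, T]$ it is identically equal to a sample $X \sim \mu$. Since $\mu$ is centred, the martingale property is immediate; uniform integrability follows because $|M^{\mathrm{pj}}_t|$ is dominated by $|X|$, which is $\mu$-integrable; and $M^{\mathrm{pj}}_T \sim \mu$ by construction. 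Crucially, $S^{\mathrm{pj}}_T = \max(0, X) = (M^{\mathrm{pj}}_T)^+$ almost surely, so the pointwise inequality above becomes an equality and the displayed bound is achieved.

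I do not expect any genuine obstacle. In contrast to the setting of Theorem \ref{thm:AzYorOpt}, where the binding constraint on the law of the running maximum comes from Theorem \ref{thm:AzYor} and has to be exploited through a supermodularity/rearrangement argument, here the almost-sure inequality $S_T \ge M_T^+$ is already sharp enough to bound $\mathbb{E}[F(M_T, S_T)]$ from above, and monotonicity of $F(w, \cdot)$ transfers the sample-path bound directly to the expectation. The one routine point worth recording is the usual measurability check needed to integrate the pointwise estimate, which is immediate from the assumed measurability of $F$; no continuity, supermodularity, or transport machinery is required.
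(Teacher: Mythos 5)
Your proof is correct and follows essentially the same route as the paper's: both reduce to the pointwise bound $F(M_T, S_T) \le F(M_T, \max(0, M_T))$ (using that each slice $F(w_0, \cdot)$ is decreasing) and then note that the pure jump process attains equality. The only cosmetic difference is that you derive $S_T \ge \max(0, M_T)$ directly from the definition of the running maximum, whereas the paper cites the corresponding support condition (\ref{eq:RogCor3}) from Rogers' characterization; these are the same fact, and, like the paper's remark, your argument makes it clear that the martingale property is never actually used.
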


\begin{proof}
The described process generates the joint distribution with support contained in the boundary of $M$ (see (\ref{eq:RogCor3}) for the definition of $M$). Since (\ref{eq:RogCor3}) prohibits placing a non-zero mass lower than this boundary and since for every $w_0$ the univariate functions $F(w_0, s)$ is assumed to be decreasing, this is an optimal admissible process.

Again, it can be seen from the following line of computations, where $U$ stands for the joint distribution of an arbitrary $\mu$-admissible process, $U_w$ stands for the conditional distribution given fixed $w$, and $PJ$ stands for the joint distribution of the pure jump process:
\begin{equation*}
\mathbb{E}_U [F(M_T, S_T)] = \int \Bigl( \int F(w, s) U_w(ds) \Bigr) \mu(dw) \le \int F(w, \max(0, w)) \mu(dw) = \mathbb{E}_{PJ} [F(M_T, S_T)].
\end{equation*}
The inequality is equality if $U$ is equivalent to $PJ$. 
\end{proof}

\begin{remark}
An example of a function satisfying assumptions of Proposition \ref{prop:PureJumpOpt} is $F(w, s) = -|w|s$.
\end{remark}

\begin{remark}
In Proposition \ref{prop:PureJumpOpt} $\mu$-admissibility can be replaced by less restrictive condition, since martingale property is not used. However, the optimal process (to be precise, at least one of the optimal processes) is still martingale.
\end{remark}

We will see, that, roughly speaking, a general optimal process has the joint distribution of the final value and the running maximum in an ''intermediate position'' between the two discussed extremal distributions.

\begin{lemma}\label{lem:MonIsJoint}
Every nondecreasing function $f$ taking values in the intersection of $M$ with the closed subgraph of the barycenter function of $\mu$, induces a measure that is the joint distribution of the terminal value and the running maximum of a $\mu$-admissible process. 
\end{lemma}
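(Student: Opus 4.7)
The plan is to exhibit the induced measure as the pushforward $\pi := (\mathrm{id}, f)_* \mu$ (the image of $\mu$ under $w \mapsto (w, f(w))$) and then verify directly that $\pi$ satisfies Rogers' four conditions (\ref{eq:RogCor1})--(\ref{eq:RogCor4}); the conclusion is then immediate from the Rogers theorem quoted above, since the first marginal of $\pi$ is $\mu$ by construction, so any martingale whose existence Rogers guarantees is automatically $\mu$-admissible.

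First I would dispatch the three easier conditions. Integrability (\ref{eq:RogCor1}) reduces to $\int |w|\, \mu(dw) < +\infty$, which holds because $\mu$ is centred and has integrable positive part (hence integrable negative part). The zero-mean condition (\ref{eq:RogCor2}) is exactly the centering of $\mu$. The support condition (\ref{eq:RogCor3}) is immediate from the assumption that the graph of $f$ lies in $M$: the support of $\pi$ is contained in $\{(w, f(w)) : w \in \mathbb{R}\} \subset M$.

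All the content is in (\ref{eq:RogCor4}). Fix $s \ge 0$ and set $k_s := \inf\{w : f(w) \ge s\}$, with $k_s = +\infty$ if this set is empty (in which case $\pi(\{y \ge s\}) = 0$ and (\ref{eq:RogCor4}) is vacuous). By monotonicity of $f$ one has $(k_s, +\infty) \subset \{w : f(w) \ge s\} \subset [k_s, +\infty)$; these two half-lines have the same $\mu$-measure because $\mu$ is atomless (the paper's standing assumption), so
\[
\mathbb{E}_\pi[x \mid y \ge s] = \frac{\int_{[k_s, +\infty)} w\, \mu(dw)}{\mu([k_s, +\infty))} = \beta_\mu(k_s).
\]
To conclude $\beta_\mu(k_s) \ge s$ I would take any sequence $w_n \downarrow k_s$ with $w_n > k_s$; then $f(w_n) \ge s$ by the definition of $k_s$, and the subgraph assumption gives $\beta_\mu(w_n) \ge f(w_n) \ge s$; continuity of $\beta_\mu$, which holds because $\mu$ is atomless as noted in the paper, yields $\beta_\mu(k_s) \ge s$ in the limit.

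I do not expect a real obstacle. The whole argument is a direct pushforward verification, and the only delicate point is the bookkeeping around jumps of $f$ and around the threshold $k_s$; both issues are trivialised by the atomlessness of $\mu$, which simultaneously makes $\beta_\mu$ continuous and erases the difference between $(k_s, +\infty)$ and $[k_s, +\infty)$ from the point of view of $\mu$-measure. No auxiliary constructions of the kind used in Lemma~\ref{lem:Variat} (competitors, trimming, cyclical monotonicity) are needed here.
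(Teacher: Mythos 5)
Your proof is correct and takes essentially the same route as the paper: realize the induced measure as the pushforward $(\mathrm{id},f)_{*}\mu$, dispatch (\ref{eq:RogCor1})--(\ref{eq:RogCor3}) directly, and reduce (\ref{eq:RogCor4}) to the fact that $\{w : f(w)\ge s\}$ is a half-line lying to the right of $\beta_\mu^{-1}(s)$. The paper compresses the last step into the chain $\mathbb{E}_{f_\#\mu}[x\mid y\ge s]\ge\mathbb{E}_{AY}[x\mid y\ge s]\ge s$ by comparing with the Az\'ema--Yor distribution, whereas you unwind that comparison into an explicit computation $\mathbb{E}_\pi[x\mid y\ge s]=\beta_\mu(k_s)\ge s$ using the subgraph hypothesis, monotonicity, and continuity of $\beta_\mu$; your version is a bit more self-contained but mathematically identical.
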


\begin{proof}
Consider the measure $f_\# \mu$. The properties (\ref{eq:RogCor1}) and (\ref{eq:RogCor2}) are immediate. 
The formulation of the Lemma states that (\ref{eq:RogCor3}) is satisfied. Finally, (\ref{eq:RogCor4})
follows from the inequality
\begin{equation}
\mathbb{E}_{f_\# \mu} [x | y \ge s] \ge \mathbb{E}_{AY} [x | y \ge s] \ge s,
\end{equation}
where subscript AY refers to the distribution generated by the Az\' ema-Yor embedding (note that $\mathbb{E}_{AY} [x | y \ge s] 
=s$ in any standard situation).
\end{proof}

\begin{theorem}\label{thm:General}
Let $F(w, s)$ be a continuous strictly supermodular serrated function. Then amongst all of $\mu$-admissible processes the functional $\mathbb{E}[F(w, s)]$ is maximized by the process with the property that the corresponding joint distribution of the final value and the running maximum is $g_\# \mu$, where $g$ is defined by
\begin{equation}
g(w) = \min(\beta_\mu(w), \max(a_w, 0, w)),
\end{equation}
$\beta_\mu$ is the same as in Theorem~\ref{thm:AzYor}, and $a_w$ is the same as in Definition \ref{def:Serr}.
\end{theorem}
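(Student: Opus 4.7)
The plan is to run the same two-step strategy used in Theorem~\ref{thm:AzYorOpt} and Proposition~\ref{prop:PureJumpOpt}: first use Lemma~\ref{lem:Variat} together with the admissibility constraints to localize the conditional distribution $U_w$ of $s$ given $w$ inside an interval determined by $\mu$, then use the serrated structure of $F$ to solve the resulting pointwise optimization problem in $s$. Finally, apply Lemma~\ref{lem:MonIsJoint} to realize the pointwise optimum as an actual admissible joint distribution.

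First I would take an optimal $\mu$-admissible process and let $U$ denote its joint distribution of $(M_T,S_T)$. Lemma~\ref{lem:Variat} gives that the support of $U$ is nondecreasing. Combined with (\ref{eq:RogCor3}), this forces the support into $M$, so $U_w$ lives in $[\max(0,w),+\infty)$ for $\mu$-a.e.\ $w$. Exactly as in the proof of Theorem~\ref{thm:AzYorOpt}, the monotonicity of the support together with the Az\'ema--Yor bound (Theorem~\ref{thm:AzYor}) also prevents any mass from lying above the graph of $\beta_\mu$, so $U_w$ is in fact supported in $[\max(0,w),\beta_\mu(w)]$ for $\mu$-a.e.\ $w$.

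Second, I would use the serrated property: on each interval $[\max(0,w),\beta_\mu(w)]$ the univariate function $F(w,\cdot)$ attains its maximum at $g(w) = \min(\beta_\mu(w),\max(a_w,0,w))$. A short case analysis (whether $a_w$ sits below, inside, or above the interval) matches each of the three clauses of $g$. Integrating in $w$ therefore yields
\begin{equation*}
\mathbb{E}_U[F(M_T,S_T)] = \int\!\!\left(\int F(w,s)\,U_w(ds)\right)\!\mu(dw) \le \int F(w,g(w))\,\mu(dw).
\end{equation*}
To show this bound is sharp, I would check that $g$ is admissible as a function representative: $\beta_\mu$ is nondecreasing, $\max(0,w)$ is nondecreasing, and by the Remark after Definition~\ref{def:Supermodular} the ridge $w\mapsto a_w$ is nondecreasing for strictly supermodular serrated $F$, so $\max(a_w,0,w)$ and hence $g$ are nondecreasing. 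By construction $g$ takes values in $M$ and in the closed subgraph of $\beta_\mu$, so Lemma~\ref{lem:MonIsJoint} certifies that $g_\#\mu$ is the joint distribution of the terminal value and the running maximum of a $\mu$-admissible process; this process attains the upper bound exactly.

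The step I expect to be most delicate is the passage from the two \emph{global} facts (monotone support and Az\'ema--Yor bound on the marginal of $S_T$) to the \emph{pointwise} localization of $U_w$ in $[\max(0,w),\beta_\mu(w)]$. Since $\mu$ is assumed atomless, monotonicity of the support identifies $U$ with the distribution supported on the graph of a nondecreasing function $h$; the Az\'ema--Yor inequality applied to $P(S_T\ge l)=\mu(\{h\ge l\})$ then forces $h\le\beta_\mu$ pointwise $\mu$-a.e., and (\ref{eq:RogCor3}) gives $h\ge\max(0,\cdot)$. This is the same implicit argument already used in Theorem~\ref{thm:AzYorOpt}, so it should be reused without modification.
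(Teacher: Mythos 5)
Your proof is correct and follows essentially the same route as the paper's: Lemma~\ref{lem:Variat} for monotone support, the Az\'ema--Yor bound and condition (\ref{eq:RogCor3}) to confine the conditional law $U_w$ to $[\max(0,w),\beta_\mu(w)]$, the serrated property to identify $g(w)$ as the pointwise maximizer there, and Lemma~\ref{lem:MonIsJoint} to certify $g_\#\mu$ as admissible. Your elaboration of the ``delicate step'' (deducing the pointwise bound $h\le\beta_\mu$ from the marginal Az\'ema--Yor inequality via the graph representation of a monotone-support coupling) makes explicit what the paper leaves implicit, but the argument is the same.
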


\begin{proof}
The described process exists and belongs to $\mu$-admissible processes, because the maximum of two monotone functions is a monotone function and the minimum of two monotone functions is again a monotone function, so $g$ is covered by Lemma~\ref{lem:MonIsJoint}. Explicit description of this process can be found in the proof of Theorem 2.2 of \cite{Rog93}.

The process is optimal, because of the following reasoning. Lemma~\ref{lem:Variat} implies that the optimal joint distribution of the terminal value and the running maximum of an admissible process must have monotone support. A joint distribution with monotone support can not place non-zero mass beyond the closed subgraph of the barycenter function of $\mu$, since it is prohibited by Theorem~\ref{thm:AzYor}. Also condition (\ref{eq:RogCor3}) states that every joint distribution of the terminal value and the running maximum of an admissible process can not place non-zero mass beyond $M$. Combining this together, obtain that for each fixed $w$ within the region where all mass must be placed the best possible point is the point that belongs to the graph of $g$. Thus, the described in the formulation process is optimal.

As before, the last statement can be seen from the following line of computations, where $U$ stands for the joint distribution of an arbitrary $\mu$-admissible process and $U_w$ stands for the conditional distribution given fixed $w$:
\begin{equation*}
\mathbb{E}_U [F(M_T, S_T)] = \int \Bigl( \int F(w, s) U_w(ds) \Bigr) \mu(dw) \le \int F(w, g(w)) \mu(dw) = \mathbb{E}_{g_\# \mu} [F(M_T, S_T)].
\end{equation*}
The inequality is equality if $U$ is equivalent to $g_\# \mu$. 
\end{proof}

\begin{remark}
The following function satisfies assumptions of Theorem \ref{thm:General}: 
$$F(w, s) = (\arctan w + 2)s - 4 \, \mathrm{Ind}_{\{(w, s) \, : \, s \ge R(w)\}}(s - R(w)),$$ where $R$ is an increasing function. The ridge of $F(w, s)$ is the graph of $s = R(w)$. 
\end{remark}

\begin{remark}
In the above theorems when it is talked about maximization of the functional $\mathbb{E}[F(M_T, S_T)]$, it means that there are no processes that allow achieving higher values. If a \mbox{$\mu$-admissible} process $U$ is such that one of the theorems states that it is optimal for corresponding $F$, but $\mathbb{E}_U[F(M_T, S_T)] = -\infty$, then, of course, all other $\mu$-admissible processes are also optimal for the same problem.
\end{remark}

\section{Generalizations}\label{Sc:Gener}

\subsection{Introducing discontinuity}\label{Sc:IntDis}

We mentioned already that the above approach to the proofs of Theorems \ref{thm:AzYorOpt} and \ref{thm:General} is not the simplest one. Instead of it one can apply directly
the optimal transportation theory. 
The standard assumption assuring existence of the solution to the Monge-Kantorovich problem is the lower semicontinuity of the cost function. In addition, the solutions to the Monge-Kantorovich problem admit the so-called cyclical monotonicity property, which can be established under assumptions that at least are not stricter than assumption of lower semicontinuity (see \cite{BK12} for references to recent results in this direction).

\vspace{5pt}

To start with, let us give some basic definitions.

\begin{problem}
Suppose that $c: \mathbb{R} \times \mathbb{R} \to \mathbb{R} \cup \{\infty\}$ is a measurable function (often it is called cost function). Suppose also that $\mu$ and $\nu$ are Borel measures on $\mathbb{R}$ and $\Pi(\mu, \nu)$ is the set of all Borel measures $\pi$ on $\mathbb{R} \times \mathbb{R}$ such that $\mathrm{Pr}_1 \, \pi = \mu$ and $\mathrm{Pr}_2 \, \pi = \nu$. \textit{The Monge-Kantorovich problem} is:
\begin{equation}
\int_{\mathbb{R} \times \mathbb{R}} c(x, y) \pi(dx, dy) \to \min\limits_{\pi \in \Pi(\mu, \nu)}.
\end{equation}
\end{problem}

Actually, above definition is not general. It is possible to consider $X \times Y$ for measure spaces $X$ and $Y$ instead of $\mathbb{R} \times \mathbb{R}$ or to introduce more than two axes (also known as marginals).

\begin{defin}
Suppose that $c: \mathbb{R} \times \mathbb{R} \to \mathbb{R} \cup \{\infty\}$ is a measurable function. The subset $\Gamma \subset \mathbb{R} \times \mathbb{R}$ is called $c$\textit{-cyclically monotone} if for every non-empty sequence of its elements $(x_1, y_1)$, ..., $(x_n, y_n)$ it is true that:
\begin{equation}
c(x_1, y_1) + c(x_2, y_2) + ... + c(x_n, y_n) \le c(x_1, y_n) + c(x_2, y_1) + ... + c(x_n, y_{n-1}).
\end{equation}
\end{defin}

\vspace{5pt}

Below the previously declared in Introduction approach is represented.

\begin{theorem}\label{thm:LSC}
In the formulations of Theorems \ref{thm:AzYorOpt} and \ref{thm:General} continuity can be replaced by upper semicontinuity.
\end{theorem}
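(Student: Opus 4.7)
The plan is to replace the constructive mass-swap argument of Lemma~\ref{lem:Variat} by a direct appeal to the Monge--Kantorovich theorem for upper semicontinuous costs. Since Theorem~\ref{thm:AzYorOpt} is the special case of Theorem~\ref{thm:General} with $a_w\equiv+\infty$, it suffices to prove the upper semicontinuous version of Theorem~\ref{thm:General}. Given an arbitrary admissible joint distribution $\pi$, I will set $\nu=\mathrm{Pr}_2\,\pi$ and apply the standard existence and cyclical monotonicity results (as in~\cite{BK12}) to the lower semicontinuous cost $c=-F$ on $\Pi(\mu,\nu)$. Two-point $c$-cyclical monotonicity together with strict supermodularity of $F$ rules out pairs $(w_1,s_1),(w_2,s_2)\in\mathrm{supp}\,\pi^*$ with $w_1<w_2$ and $s_1>s_2$, so the optimizer $\pi^*$ has nondecreasing support. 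Atomlessness of $\mu$ then forces $\pi^*=(\mathrm{id},T)_\#\mu$ for a nondecreasing $T$; in other words, $\pi^*$ is the comonotone coupling $\pi_{co}$ of $(\mu,\nu)$.

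The main technical step is to verify that $\pi_{co}$ is itself admissible. Conditions~(\ref{eq:RogCor1}) and~(\ref{eq:RogCor2}) follow from $\mathrm{Pr}_1\,\pi_{co}=\mu$. For~(\ref{eq:RogCor3}), admissibility of $\pi$ forces $\nu$ to be supported on $\mathbb{R}_{\ge 0}$ and to stochastically dominate $\mu$ (since $y\ge x$ holds $\pi$-a.s.), hence the comonotone map obeys $T(w)\ge\max(0,w)$. For~(\ref{eq:RogCor4}), I will use the elementary fact that for each $s\ge 0$ the integrand $(x,y)\mapsto x\,\mathbf{1}_{\{y\ge s\}}$ is supermodular; the rearrangement inequality then gives
$$\mathbb{E}_{\pi_{co}}\bigl[x\,\mathbf{1}_{\{y\ge s\}}\bigr]\ge\mathbb{E}_\pi\bigl[x\,\mathbf{1}_{\{y\ge s\}}\bigr],$$
and dividing by the common value $\nu([s,\infty))$ yields $\mathbb{E}_{\pi_{co}}[x\mid y\ge s]\ge\mathbb{E}_\pi[x\mid y\ge s]\ge s$.

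With admissibility of $\pi_{co}$ in hand, Theorem~\ref{thm:AzYor} implies that $\nu$ is stochastically dominated by the running-maximum law of the Az\'ema--Yor embedding, and comonotonicity then forces $T(w)\le\beta_\mu(w)$ for $\mu$-a.e.\ $w$. Therefore $T(w)\in[\max(0,w),\beta_\mu(w)]$, and the serrated upper semicontinuous structure of $F(w,\cdot)$ guarantees $F(w,T(w))\le F(w,g(w))$ pointwise, where $g(w)=\min(\beta_\mu(w),\max(a_w,0,w))$. Integration yields
$$\int F\,d\pi\le\int F\,d\pi_{co}=\int F(w,T(w))\,d\mu(w)\le\int F(w,g(w))\,d\mu(w)=\int F\,d(g_\#\mu),$$
and $g_\#\mu$ is admissible by Lemma~\ref{lem:MonIsJoint}, hence optimal.

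The hard part is verifying admissibility of $\pi_{co}$, with condition~(\ref{eq:RogCor4}) being the delicate one: its preservation hinges precisely on the supermodularity of the auxiliary integrand $(x,y)\mapsto x\,\mathbf{1}_{\{y\ge s\}}$, so that the same monotone rearrangement that converts $\pi$ into $\pi_{co}$ also preserves the Rogers conditional-mean bound. A secondary concern is ensuring existence and cyclical monotonicity of the Monge--Kantorovich solution for the specific upper semicontinuous (possibly unbounded) cost at hand; this fits within the framework discussed in~\cite{BK12}.
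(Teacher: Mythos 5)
Your proposal is correct and follows essentially the same architecture as the paper's own proof: both pass to the Monge--Kantorovich problem with lower semicontinuous cost $c=-F$ and marginals $(\mu,\nu)$ where $\nu=\mathrm{Pr}_2\,\pi$, both invoke $c$-cyclical monotonicity to conclude the optimizer is concentrated on the graph of a nondecreasing map $T$, both verify Rogers's conditions for that optimizer, and both finish via the pointwise bound $F(w,T(w))\le F(w,g(w))$ already used in Theorem~\ref{thm:General}. The one place you genuinely diverge is the verification of condition~(\ref{eq:RogCor4}): the paper deduces $\mathbb{E}_{\overline{\pi}}[x\mid y\ge s]\ge\mathbb{E}_{AY}[x\mid y\ge s]\ge s$ from the tail-mass inequality $\nu([s,\infty))\le\nu_{AY}([s,\infty))$ supplied by Theorem~\ref{thm:AzYor} (together with the fact that both $\overline{\pi}$ and the Az\'ema--Yor coupling are comonotone in $\mu$), whereas you compare directly with the original $\pi$ via supermodularity of $(x,y)\mapsto x\,\mathbf{1}_{\{y\ge s\}}$ and the Lorentz/Cambanis--Simons--Stout rearrangement inequality, giving $\mathbb{E}_{\pi_{co}}[x\mid y\ge s]\ge\mathbb{E}_{\pi}[x\mid y\ge s]\ge s$. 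Your route for this step is a bit more self-contained, since it does not use Theorem~\ref{thm:AzYor} to establish admissibility (you still need it afterwards to bound $T(w)\le\beta_\mu(w)$), and it makes transparent that the monotone rearrangement preserving the marginals also preserves Rogers's conditional-mean bound. Both you and the paper rely on the same background assumption about existence and cyclical monotonicity of Monge--Kantorovich optimizers for a possibly unbounded lower semicontinuous cost; you at least flag this explicitly, which the paper does only briefly.
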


\begin{proof}
Suppose that a joint distribution $\pi$ of the final value and the running maximum of a $\mu$-admissible process is given. Denote $\mathrm{Pr}_2 \, \pi$ as $\nu$. Let $\overline{\pi}$ be a solution to the Monge-Kantorovich problem with the marginals $\mu$ and $\nu$ and the cost function $-F(w,s)$. Here the sign is reversed, because the initial problem is a maximization problem, but the Monge-Kantorovich problem is a minimization problem. Solution to the described Monge-Kantorovich problem exists, because the cost function $-F$ is lower semicontinuous.

It can be easily verified that the $c$-cyclical monotonicity (i.e. $-F(w,s)$-cyclical monotonicity) implies that $\overline{\pi}$ is concentrated on the graph of a monotone function $T$ (this is a standard observation coming from the optimal transportation theory):
\begin{equation}\label{eq:GraphConc}
\overline{\pi} \Bigl(\{(x, T(x)): \ x \in \mathbb{R} \}\Bigr)=1.
\end{equation}
% which means in particular that $\mu((-\infty; x)) = \nu((-\infty; T(x))$.

Let us show that $\overline{\pi}$ is the joint distribution of an admissible process. Conditions (\ref{eq:RogCor1}) and (\ref{eq:RogCor2}) are satisfied automatically. Further, $\mathrm{supp} \, \overline{\pi} \subset \mathbb{R} \times \mathbb{R}_{\ge 0}$, because $\mathrm{supp} \, \nu \subset \mathbb{R}_{\ge 0}$, meanwhile $\mathrm{supp} \, \overline{\pi} \subset \{(x,y) \, : \, y - x \ge 0 \}$ due to the following reasoning. For the initial joint distribution $\pi$ it is true that:
\begin{equation}\label{eq:MeasLoc}
\forall k > 0 \; \nu([0;k]) = \pi(\mathbb{R} \times [0;k]) = \pi((-\infty; k] \times [0;k]) \le \mu((-\infty; k]).
\end{equation}
Combining (\ref{eq:MeasLoc}) with (\ref{eq:GraphConc}) yields that (\ref{eq:RogCor3}) is checked. Finally, (\ref{eq:RogCor4}) holds true, because $\forall k > 0$ $\nu([k; +\infty)) \le \nu_{AY}([k; +\infty))$, where subscript AY refers to the joint distribution generated by the Az\' ema-Yor embedding, so $\mathbb{E}_{\overline{\pi}} [x | y \ge s] \ge \mathbb{E}_{AY} [x | y \ge s] \ge s$.

The construction ensures that the process that generates $\overline{\pi}$ is not worse than the initial process.
 Since its joint distribution is supported on the graph of a monotone function, 
the further proof can follow the arguments of Theorems \ref{thm:AzYorOpt} and \ref{thm:General}.
\end{proof}

However, as far as the author knows, in the Monge-Kantorovich theory there is no analogous result for upper semicontinuity instead of lower semicontinuity. Fortunately, the approach of Section \ref{Sc:Basic} is applicable to some lower semicontinuous functions.

\begin{propos}
Assume that $F(w,s)$ is a continuous supermodular serrated function and $G_{a,b}(w,s) = \mathrm{Ind}_{\{(w,s) \: : \: w \ge a, \, s \ge b\}}$, where $(a, b)$ is in the ridge of $F(w,s)$. Then Theorems \ref{thm:AzYorOpt} and \ref{thm:General} hold true for $(F+G_{a,b})(w,s)$, which is lower semicontinuous supermodular serrated function with the same ridge.
\end{propos}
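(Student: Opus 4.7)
Setting $H := F + G_{a,b}$, my approach is to rerun the variational argument of Lemma~\ref{lem:Variat} with $H$ in place of $F$ and then invoke the proofs of Theorems~\ref{thm:AzYorOpt} and~\ref{thm:General} unchanged; I read the hypothesis as requiring $F$ to be \emph{strictly} supermodular, which is what the Section~\ref{Sc:Basic} machinery uses. I first verify the structural claims. $G_{a,b}$ is l.s.c.\ as the indicator of a closed set and supermodular since $\{w\ge a,\,s\ge b\}$ is a sublattice of $\mathbb{R}^2$ (direct four-case check), so $H$ is l.s.c.\ and strictly supermodular by the Introduction's remark. For the ridge: when $w_0<a$ one has $H(w_0,\cdot)=F(w_0,\cdot)$; when $w_0\ge a$, the fact that $R_F$ is increasing together with $(a,b)\in R_F$ forces $a_{w_0}\ge b$, so the unit upward jump of $G_{a,b}(w_0,\cdot)$ at $s=b$ sits strictly inside the increasing part of $F(w_0,\cdot)$. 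Hence $H(w_0,\cdot)$ is still increasing on $(-\infty,a_{w_0})$ and decreasing on $(a_{w_0},+\infty)$, and its supremum is attained at $s=a_{w_0}$ by l.s.c.\ of $G_{a,b}$; the ridge of $H$ is therefore $R_F$.

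\textbf{Variational step.} Assume for contradiction that some optimal $\mu$-admissible joint distribution $\pi$ fails to be nondecreasing and carry out the construction of Lemma~\ref{lem:Variat} to get $\sigma_1,\sigma_3$ from $\pi$ on rectangles $U_1\times V_1$ and $U_2\times V_2$. I must specify competitor masses $\sigma_2,\sigma_4$ on $U_2\times V_1$ and $U_1\times V_2$ with the prescribed marginals so that the gain $\int H\,d\bar\pi-\int H\,d\pi$ is strictly positive. Split this gain as $\Delta_F+\Delta_G$. The $F$-part is strictly positive by the uniform continuity argument of Lemma~\ref{lem:Variat}, which uses only strict supermodularity and continuity of $F$ and works for \emph{any} admissible choice of $\sigma_2,\sigma_4$. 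For $\Delta_G$ I use the freedom in that choice: take $\sigma_2+\sigma_4$ to be the comonotone (quantile) coupling of the total marginals $\mathrm{Pr}_1(\sigma_1+\sigma_3)$ and $\mathrm{Pr}_2(\sigma_1+\sigma_3)$. Since each of these marginals has mass $m$ on each of the disjoint pieces $U_1,U_2$ and $V_2,V_1$ respectively, quantile matching automatically places mass $m$ on $U_1\times V_2$ and mass $m$ on $U_2\times V_1$ with precisely the marginal profiles required by the original construction. For the indicator $G_{a,b}$ the Fr\'echet--Hoeffding inequality
\begin{equation*}
\pi'(\{w\ge a,\,s\ge b\})\le\min\bigl(\mathrm{Pr}_1\pi'([a,\infty)),\,\mathrm{Pr}_2\pi'([b,\infty))\bigr)
\end{equation*}
holds for every coupling $\pi'\in\Pi(\mathrm{Pr}_1(\sigma_1+\sigma_3),\mathrm{Pr}_2(\sigma_1+\sigma_3))$ and is saturated exactly by the comonotone coupling, so $\Delta_G\ge 0$. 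Combined with $\Delta_F>0$ the total gain is strictly positive, contradicting optimality of $\pi$.

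\textbf{Conclusion and main obstacle.} Once the analogue of Lemma~\ref{lem:Variat} holds for $H$, the proofs of Theorems~\ref{thm:AzYorOpt} and~\ref{thm:General} go through verbatim: every optimal joint distribution is supported on a nondecreasing graph lying in the intersection of the closed subgraph of $\beta_\mu$ with $M$, and slice-wise maximization of $H(w,\cdot)$ --- whose supremum is still at $s=a_{w}$ and is attained there --- forces the mass to be concentrated on $\{(w,g(w))\}$. The genuine obstacle is the loss of uniform continuity caused by the jump of $G_{a,b}$, which would break a direct application of Lemma~\ref{lem:Variat} to $H$. The fix is to decouple $H$ into its continuous part $F$, treated by the original uniform continuity estimate of Section~\ref{Sc:Basic}, and its indicator part $G_{a,b}$, treated by choosing the competitor to be the Fr\'echet--Hoeffding maximizer of the relevant marginals.
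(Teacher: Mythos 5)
Your proof is correct and shares the paper's overall scheme --- reduce to an analogue of Lemma~\ref{lem:Variat}, decompose the gain from the rearrangement into an $F$-part (handled by the original uniform continuity estimate) and a $G_{a,b}$-part, and then invoke the proofs of Theorems~\ref{thm:AzYorOpt} and~\ref{thm:General} --- but the treatment of the indicator part is genuinely different. The paper keeps the measures $\sigma_1,\dots,\sigma_4$ abstract and asserts that $\int G_{a,b}\,d(\sigma_2+\sigma_4-\sigma_1-\sigma_3)\ge 0$ follows from a (left unwritten) case analysis of the supports' positions relative to the quadrant $\{w\ge a,\,s\ge b\}$, using only the marginal identities $\mathrm{Pr}_1\sigma_1=\mathrm{Pr}_1\sigma_4$, $\mathrm{Pr}_2\sigma_1=\mathrm{Pr}_2\sigma_2$, $\mathrm{Pr}_1\sigma_2=\mathrm{Pr}_1\sigma_3$, $\mathrm{Pr}_2\sigma_3=\mathrm{Pr}_2\sigma_4$. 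You instead exploit the non-uniqueness of $\sigma_2,\sigma_4$ (which the paper itself flags in the proof of Lemma~\ref{lem:Variat}) to \emph{choose} $\sigma_2+\sigma_4$ as the comonotone coupling of the marginals of $\sigma_1+\sigma_3$; since $U_1<U_2$ and $V_2<V_1$ and each block carries exactly mass $m$, this choice automatically splits as required into a piece on $U_1\times V_2$ and one on $U_2\times V_1$ with the correct individual marginals, and the Fr\'echet--Hoeffding inequality then gives $\Delta_G\ge 0$ in one line. This buys you a conceptually clean, casework-free argument and also makes immediate the extension to the other indicator shapes mentioned in the paper's subsequent remark, since all of them are supermodular and hence maximized by the comonotone coupling. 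You also explicitly verify the structural claims about $H=F+G_{a,b}$ (l.s.c., strict supermodularity inherited from $F$, ridge preservation via monotonicity of $R_F$ and $(a,b)\in R_F$) that the paper dismisses as following "due to its construction"; your reading of the hypothesis as requiring $F$ strictly supermodular, consistent with the paragraph in the Introduction on sums of a strictly supermodular and a supermodular function, is the intended one.
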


\begin{proof}
$(F+G_{a,b})(w,s)$ possesses declared properties due to its construction. It is sufficient to prove only generalization of Lemma \ref{lem:Variat}, because other reasonings of the theorems are still applicable.

Consider four measures $\sigma_1$, $\sigma_2$, $\sigma_3$ and $\sigma_4$ which are applied in the proof of Lemma~\ref{lem:Variat} and have equal masses $m$. The desired result follows from the inequality:
\begin{equation}\label{eq:Indicator}
\int G_{a,b}(w,s) (\sigma_2 + \sigma_4 - \sigma_1 - \sigma_3)(dw, ds) \ge 0.
\end{equation}
The above inequality can be proved by trivial analysis of possible configurations of supports of this four measures relatively $\{(w,s) \: : \: w \ge a, \, s \ge b\}$. This analysis should be based on applications of the equalities $\mathrm{Pr}_1 \, \sigma_1 = \mathrm{Pr}_1 \, \sigma_4$, $\mathrm{Pr}_2 \, \sigma_1 = \mathrm{Pr}_2 \, \sigma_4$, $\mathrm{Pr}_1 \, \sigma_2 = \mathrm{Pr}_1 \, \sigma_3$, and $\mathrm{Pr}_2 \, \sigma_3 = \mathrm{Pr}_2 \, \sigma_4$. It is just plain geometry and arithmetic.
\end{proof}

\begin{remark}
Of course, the previous proposition is true not only for $\mathrm{Ind}_{\{(w,s) \: : \: w \ge a, \, s \ge b\}}$, but it also remains valid for $\mathrm{Ind}_{\{(w,s) \: : \: w \le a, \, s \le b\}}$, $-\mathrm{Ind}_{\{(w,s) \: : \: w > a, \, s < b\}}$, $-\mathrm{Ind}_{\{(w,s) \: : \: w < a, \, s > b\}}$, and positive linear combinations of such functions. 
\end{remark}

\subsection{Towards the result of Hobson and Klimmek}\label{Sc:TRoHaK}

Everywhere above the second marginal of the joint distribution was fixed. This is consistent with the spirit of the Monge-Kantorovich theory, but the price for this is a quite restrictive requirement of supermodularity. If $F(w,s)$ is continuously differentiable with respect to $s$, its supermodularity is equivalent to the assumption that all the univariate functions $F_s(w,s_0)$ with arbitrarily fixed $s_0$, are increasing. In \cite{HK12} a weaker assumption was suggested: why not suppose that only $F_s(w,s_0) / (s_0 - w)$ is monotonic? Actually, attentive reader can see that in the proof of Lemma~\ref{lem:Variat} in (\ref{eq:ProofRogCor4-T}) condition (\ref{eq:RogCor4}) is satisfied in a non-optimal manner and for some $s$ inequality $\mathbb{E}_{\overline{\pi}}[ x \, | \, y \ge s] \ge s$ is strict, which means that an additional mass may be lifted up. This is refined in the current subsection.

To make the idea clear, do the following. Consider the function $F(w,s)$ with continuous partial derivative $F_s(w,s)$. Moreover, we assume that for each $s_0 > 0$ the function $F_s(w,s_0) / (s_0 - w)$ is increasing for $w < s_0$. Suppose that $\pi$ is a joint distribution of a martingale with terminal value law $\mu$ and that there are points $(w_1, s_1)$ and $(w_2, s_2)$, with $w_1 < w_2$ and $s_1 > s_2$, such that $\pi$ has atoms with masses at least $m$ at this points. Remove mass $(s_2 - w_2)/(s_2 - w_1)m$ from $(w_1,s_1)$ to $(w_1,s_2)$ and remove mass $m$ from $(w_2,s_2)$ to the point $(w_2, s_1)$ and the vertical open interval between $(w_2,s_2)$ and $(w_2,s_1)$ in such a way that for every $q$, $s_2 < q \le s_1$, the mass lifted not lower than $q$ is $((q-w_1)(s_2 - w_2))/((q-w_2)(s_2 - w_1))m$. It can be seen clearly that the coefficients were chosen in order to provide that (\ref{eq:RogCor4}) remains true. Because the properties (\ref{eq:RogCor1})-(\ref{eq:RogCor3}) here are trivial, new measure is a joint distribution as well. Moreover, this measure is a better competitor, since the gain from the reassignment is:
\begin{equation}
\Delta := \int_{(w_2,s_2)}^{(w_2,s_1)}F_s(w_2, \xi)\frac{(\xi-w_1)(s_2 - w_2)}{(\xi-w_2)(s_2 - w_1)}m \, d\xi - \int_{(w_1,s_2)}^{(w_1,s_1)}F_s(w_1, \xi)\frac{(s_2 - w_2)}{(s_2 - w_1)}m \, d\xi.
\end{equation}
The assumptions on $F(w,s)$ imply that the latter is positive.

The proof of the following statement is omitted because up to some inessential technicalities it is reduced to the proof of an appropriate analogue of Lemma \ref{lem:Variat} and follows the same line. 

\begin{theorem}
The results of Theorems \ref{thm:AzYorOpt} and \ref{thm:General} hold true under the following assumptions imposed on $F$ instead of strict supermodularity: $F(w,s)$ is continuous, there exists a continuous partial derivative $F_s(w,s)$, and, in addition, for each $s_0 > 0$ the function $F_s(w,s_0) / (s_0 - w)$ is increasing for $w < s_0$.
\end{theorem}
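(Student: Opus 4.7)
The plan is to reduce everything to establishing the right analogue of Lemma~\ref{lem:Variat} under the weakened hypothesis on $F$: the downstream arguments of Theorems~\ref{thm:AzYorOpt} and~\ref{thm:General} only use that an optimal joint law is concentrated on the graph of a nondecreasing function together with the constraints coming from Lemma~\ref{lem:MonIsJoint}, so they carry over verbatim once the variational lemma is re-proved. Thus the only substantive task is to show: if a $\mu$-admissible joint distribution $\pi$ does not have a nondecreasing support, then one can construct a $\mu$-admissible competitor $\overline{\pi}$ with strictly larger $\mathbb{E}[F(M_T, S_T)]$.

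Following the template of Lemma~\ref{lem:Variat}, I would first trim the bad configuration down to two small disjoint rectangles $U_1 \times V_1$ and $U_2 \times V_2$ of controllably small diameter, supported at representative points $(w_1, s_1)$ and $(w_2, s_2)$ with $w_1 < w_2$ and $s_2 < s_1$, each of positive $\pi$-mass and (by choosing $\delta$ small enough, as in Lemma~\ref{lem:Variat}) separated strictly from the diagonal $y = x$ and from $y = 0$. The key new ingredient, already announced in the preamble to the theorem, is to abandon the symmetric four-cell swap in favour of an asymmetric reassignment. Setting $\rho = (s_2 - w_2)/(s_2 - w_1)$, I would disintegrate $\pi|_{U_1 \times V_1}$ along its first coordinate, remove a total mass $\rho m$ from $U_1 \times V_1$, and redeposit it on $U_1 \times V_2$ with the same first-coordinate marginal; simultaneously, I would remove a mass $m$ from $U_2 \times V_2$ and spread it over $U_2 \times [s_2, s_1]$, preserving its first-coordinate marginal and with vertical tail profile chosen so that the mass placed above any height $q \in [s_2, s_1]$ equals $\tfrac{(q - w_1)(s_2 - w_2)}{(q - w_2)(s_2 - w_1)}m$.

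The verification of Rogers' conditions for $\overline{\pi}$ closely mimics the original lemma: (\ref{eq:RogCor1}) and (\ref{eq:RogCor2}) hold by first-marginal preservation (so $\mathrm{Pr}_1 \overline{\pi} = \mu$), and (\ref{eq:RogCor3}) is the reason for trimming the rectangles away from $\{y = x\}$ and $\{y = 0\}$. The whole point of the coefficient $\rho$ and of the prescribed tail profile is to make (\ref{eq:RogCor4}) go through at every threshold $s$: a direct bookkeeping of the moved mass shows that the increments to $\mathbb{E}[x\, \mathbf{1}_{y\ge s}]$ and to $\mathbb{P}(y \ge s)$ are balanced so that $\mathbb{E}_{\overline{\pi}}[x \mid y \ge s] \ge \mathbb{E}_\pi[x \mid y \ge s] \ge s$. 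Strict improvement of the cost then reduces, to leading order in the diameter of the rectangles, to the positivity of
\begin{equation*}
\Delta \;=\; \frac{m(s_2 - w_2)}{s_2 - w_1} \int_{s_2}^{s_1} (\xi - w_1)\left[\frac{F_s(w_2, \xi)}{\xi - w_2} - \frac{F_s(w_1, \xi)}{\xi - w_1}\right] d\xi,
\end{equation*}
which is the rearrangement of the expression displayed just before the theorem. The bracket is strictly positive by the new hypothesis that $w \mapsto F_s(w,\xi)/(\xi - w)$ is increasing on $\{w < \xi\}$ (applied at $\xi \ge s_2 > w_2 > w_1$), and continuity of $F_s$ guarantees that the higher-order corrections from replacing representative points by honest rectangles can be dominated by $\Delta$ once the rectangles are chosen small enough.

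The step I expect to be the main obstacle is organising the vertical spread on $U_2$ so that it simultaneously (i) preserves the first-coordinate marginal, (ii) realises the prescribed tail profile in the second coordinate (so that (\ref{eq:RogCor4}) is saved at every $s$, not only in the mean), and (iii) is consistent with a disintegration of the $\pi$-mass being removed from $U_2 \times V_2$. A product-type construction on $U_2 \times [s_2, s_1]$ combined with a uniform-continuity approximation, paralleling the use of uniform continuity of $F$ in the proof of Lemma~\ref{lem:Variat}, should handle this, but the bookkeeping is exactly the ``inessential technicality'' to which the author defers. Everything else is a structural copy of the original proof.
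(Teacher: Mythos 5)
Your proposal matches the paper's own sketch point for point: the asymmetric two-rectangle reassignment with coefficient $\rho = (s_2-w_2)/(s_2-w_1)$, the prescribed tail profile over $U_2 \times [s_2, s_1]$, and your rearranged formula for $\Delta$ are exactly what the paper displays in the paragraph preceding the theorem, and the ``inessential technicality'' you single out (building a vertical spread that simultaneously preserves the first marginal, realizes the tail profile, and disintegrates the removed $\pi$-mass) is precisely the one the paper defers.

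One small but real correction to your verification of (\ref{eq:RogCor4}): the chain $\mathbb{E}_{\overline{\pi}}[x \mid y \ge s] \ge \mathbb{E}_\pi[x \mid y \ge s] \ge s$, borrowed from (\ref{eq:ProofRogCor4-T}), is false for this rearrangement. A direct computation shows the coefficients are tuned so that the signed quantity $\mathbb{E}\bigl[(x-s)\mathbf{1}_{\{y\ge s\}}\bigr]$ is preserved \emph{exactly} for every threshold $s$, while $\mathbb{P}(y \ge s)$ strictly increases for $s \in (s_2, s_1]$; consequently the conditional expectation $\mathbb{E}_{\overline{\pi}}[x \mid y \ge s]$ actually \emph{decreases} on that range (remaining $\ge s$, but no longer dominating $\mathbb{E}_\pi[x \mid y \ge s]$). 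This is exactly the sense in which, as the subsection's preamble observes, the refined reassignment saturates constraint (\ref{eq:RogCor4}) rather than satisfying it with slack. The fix is immediate: verify (\ref{eq:RogCor4}) by checking $\mathbb{E}_{\overline{\pi}}\bigl[(x-s)\mathbf{1}_{\{y\ge s\}}\bigr] \ge 0$ directly, rather than via a monotonicity claim for the conditional expectation; all the ingredients are already in your bookkeeping. With that replacement the argument is sound and follows the same route as the paper.
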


\begin{remark}
The case of nondecreasing $F_s(w, s_0) / (s_0 - w)$ for every $s_0 > 0$ and $w < s_0$ is considered in \cite{HK12}, but it is not covered here. 
\end{remark}

\vspace{20pt}

\end{document}